\newtheorem{thm}{Theorem}[section]
\newtheorem{cor}[thm]{Corollary}
\newtheorem{prop}[thm]{Proposition}
\newtheorem{rem}[thm]{Remark}
\newtheorem{conj}[thm]{Conjecture}
\newtheorem{defn}[thm]{Definition}
\begin{document}

\title[Lefschetz fibrations with small slope]
{Lefschetz fibrations with small slope}

\author[N. Monden]{Naoyuki Monden}
\address{Department of Mathematics, Graduate School of Science, Kyoto University, Kyoto, Japan}
\email{n-monden@math.kyoto-u.ac.jp}

\begin{abstract}
We construct Lefschetz fibrations over $S^2$ which do not satisfy the slope inequality. 
This gives a negative answer to a question of Hain.  \\[-20pt]
\end{abstract}

\maketitle

\setcounter{secnumdepth}{2}
\setcounter{section}{0}


\section{Introduction}
By the remarkable works of Donaldson \cite{Do} and Gompf \cite{GS}, it turned out that Lefschetz fibrations 
are closely connected with symplectic 4-manifolds. 
As a result, the study of Lefschetz fibrations has been an active area of research. 
In this paper, we consider the geography problem of Lefschetz fibrations over $S^2$ which derives from that of complex surfaces fibred over curves. 

We introduce two kinds of 
geography problems. 
Let $\sigma$ and $e$ be the signature and the Euler characteristic of 
a closed oriented smooth $4$-manifold $X$, respectively. 
For an almost complex closed $4$-manifold $X$, we set $K^2:=3\sigma+2e$ 
and $\chi_h:=(\sigma+e)/4$ (the \textit{holomorphic Euler characteristic}).

One is the geography problem for complex surfaces 
(i.e. the characterization of pairs $(K^2,\chi_h)$ corresponding to minimal complex surfaces. 
It is well-known that for a minimal complex surface of general type, $K^2>0$, $\chi_h>0$ 
and $2\chi_h-6\leq K^2\leq 9\chi_h$. 
The latter two inequalities are called the \textit{Noether-} and 
\textit{Bogomolov-Miyaoka-Yau-inequalities} (cf. \cite{BPV}). 
The above geography problem can be extend to the symplectic 4-manifolds. 
However, there exists minimal symplectic manifolds which do not satisfy the Noether inequality. 
Fintushel and Stern \cite{FS1} constructed Lefschetz fibration which does not satisfy the Noether inequality. 
In particular, for most pairs $(p,q)$ satisfying $p<2q-6$, there exists a minimal symplectic 4-manifold with $p=K^2$ and $q=\chi_h$ (cf. \cite{GS}).
On the other hand, no examples of a minimal symplectic 4-manifold with $K^2>9\chi_h$ have been found yet.

The other is the geography problem for complex surfaces fibred over curves. 
Hereafter, we assume $g\geq 2$. 
Let $f:S\rightarrow C$ be a relatively minimal holomorphic genus-$g$ fibration, 
where $S$ is a complex surface and $C$ is a complex curve of genus $k$. 
We define relative numerical invariants $\chi_f:=\chi_h-(g-1)(k-1)$ and 
$K_f^2:=K^2-8(g-1)(k-1)$ for $f:S\rightarrow C$. 
Then, we have two inequalities $\chi_f\geq 0$ and $K_f^2\geq 0$ known as 
\textit{Beauville's inequality} (cf. \cite{Be}) and \textit{Arakelov's inequality} (cf. \cite{Ar}), respectively. 
For $\chi_f\neq 0$, which is equivalent to the fact that $f$ is not a holomorphic bundle, we define $\lambda_f$ to be the quotient $K_f^2/\chi_f$. 
We call $\lambda_f$ the slope of $f$. 
Xiao \cite{Xi} proved that $4-4/g\leq \lambda_f\leq 12$ 
(i.e., $(4-4/g)\chi_f\leq K_f^2\leq 12\chi_f$). 
The former inequality is called the \textit{slope inequality}. 

The study of the slope of holomorphic fibrations was mainly motivated by Severi's inequality
, which states that if $S$ is a minimal surface of general type of maximal Albanese dimension, then $K^2\geq 4\chi_h$. 
In another words, if $K^2< 4\chi_h$, then $S$ is a surface fibred over $C$ of genus $b_1(S)/2$. 
Severi \cite{Se} claimed it in 1932, but his proof was not correct (cf. \cite{Cat}). 
The inequality was independently posed as a conjecture by Reid \cite{Re} and by Catanese \cite{Cat}. 
Xiao \cite{Xi} proved the conjecture when $S$ is a surface fibred over a curve of positive genus. 
He showed that if $S$ admits a holomorphic genus $g$ fibration $f$ over $C$ of positive genus $k$ with 
$K^2<4\chi_h+4(g-1)(k-1)$ (i.e. $\lambda_f<4$), then $k=b_1(S)/2$. 
Konno \cite{Kon96} proved the inequality in the case of \textit{even} surfaces. 
The conjecture was solved by Manetti \cite{Man03} when $S$ has ample canonical bundle. 
Pardini \cite{Pa} proved the conjecture completely by using the slope inequality for holomorphic fibrations over $\mathbb{CP}^1$.


Let $f:X\rightarrow S^2$ be a relatively minimal genus-$g$ Lefschetz fibration with $n$ singular fibers. 
Then, $\chi_f$, $K^2_f$ and the slope $\lambda_f$ are defined in the same way as for complex surfaces fibred over curves. 
From $e(X)=-4(g-1)+n$ and the results of Ozbagci \cite{Oz} and Stipsicz \cite{St}, we have $\chi_f\geq 0$, $K_f^2\geq 4g-4$ and $\lambda_f\leq 10$. 
By the result of Li \cite{Li}, we find that $\chi_f=0$ if and only if $n=0$ (i.e., $X=\Sigma_g\times S^2$). 
Moreover, it is well-known that 
any hyperelliptic Lefschetz fibrations 
satisfy the slope inequality. 
Therefore, genus-$2$ Lefschetz fibrations satisfy the slope inequality. 
In particular, if $f$ is a hyperelliptic Lefschetz fibration with only nonseparating vanishing cycles, 
then $\lambda_f$ is equal to $4-4/g$. 
To author's knowledge, the slope of all known Lefschetz fibrations over 
$S^2$ is greater than or equal to $4-4/g$.
\begin{conj}[Hain cf. \cite{ABKP}, Question 5.10, \cite{EN}, Conjecture 4.12]\label{slope} 
For every genus-$g$ Lefschetz fibration $f: X\rightarrow S^2$, 
the slope inequality $\lambda_f \geq 4-4/g$ holds.
\end{conj}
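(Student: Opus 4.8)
The plan is to reduce Hain's slope inequality to a single signature estimate and then attempt to establish that estimate via Meyer's signature cocycle. First I would rewrite everything in terms of the two primary invariants $\sigma = \sigma(X)$ and $e = e(X)$. Using $\chi_h = (\sigma+e)/4$, $K^2 = 3\sigma + 2e$, and the base relations $\chi_f = \chi_h + (g-1)$, $K_f^2 = K^2 + 8(g-1)$ (the case $k=0$), a direct computation gives
\[
g K_f^2 - (4g-4)\chi_f = (2g+1)\sigma + (g+1)e + 4(g-1)(g+1).
\]
Substituting $e = -4(g-1)+n$ collapses the constant, leaving
\[
g K_f^2 - (4g-4)\chi_f = (2g+1)\sigma + (g+1)n.
\]
Hence the slope inequality $\lambda_f \geq 4 - 4/g$ is \emph{equivalent} to the signature bound
\[
\sigma(X) \;\geq\; -\frac{g+1}{2g+1}\,n.
\]
This is the real content: the conjecture asserts that the signature of a genus-$g$ Lefschetz fibration cannot fall below $-\frac{g+1}{2g+1}$ times the number of its vanishing cycles.

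Second, I would compute $\sigma(X)$ through Meyer's signature cocycle, writing $\sigma$ as a sum of local contributions indexed by the Hurwitz factorization of the monodromy. Separating the $n$ singular fibers into $n_0$ with nonseparating vanishing cycle and $s_h$ whose cycle separates off a genus-$h$ piece (so $n = n_0 + \sum_h s_h$), the goal is a per-fiber lower bound on each local signature. In the hyperelliptic case this is exactly achievable: Endo's formula gives
\[
\sigma(X) = -\frac{g+1}{2g+1}\,n_0 + \sum_{h=1}^{\lfloor g/2\rfloor}\left(\frac{4h(g-h)}{2g+1}-1\right)s_h,
\]
and since $4h(g-h) - g \geq 0$ for $1 \leq h \leq \lfloor g/2\rfloor$ and $g\geq 2$, one checks $\sigma + \frac{g+1}{2g+1}n = \sum_h \frac{4h(g-h)-g}{2g+1}\,s_h \geq 0$. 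This reproduces the known hyperelliptic case, with equality precisely when all cycles are nonseparating, matching the boundary value $4-4/g$.

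The hard part --- and the step I expect to be the genuine obstacle --- is obtaining a uniform per-fiber lower bound on the Meyer-cocycle contribution for \emph{non-hyperelliptic} monodromies. A nonseparating vanishing cycle does not carry a single, monodromy-independent local signature: its contribution through Meyer's cocycle depends on how the Dehn twist sits relative to the other factors, that is, on the symplectic action on $H_1(\Sigma_g)$, and not merely on the topological type of the cycle. Outside the hyperelliptic locus there is no reason for these contributions to be bounded below by $-\frac{g+1}{2g+1}$ on average, and this is exactly where I would expect the argument to fail. Rather than a proof, the natural conclusion is that one should search for relatively minimal factorizations whose cocycle total pushes $\sigma(X)$ strictly below $-\frac{g+1}{2g+1}n$, thereby producing $\lambda_f < 4 - 4/g$ and \emph{refuting} the conjecture.
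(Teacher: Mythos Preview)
Your reduction is correct and useful: the slope inequality is indeed equivalent to the signature bound $(2g+1)\sigma + (g+1)n \geq 0$, and your recovery of the hyperelliptic case via Endo's formula is accurate. Your final instinct is also right: the conjecture is \emph{false}, and the paper does not prove it but rather refutes it (Theorem~\ref{nonholo}). So there is no ``paper's proof'' of the statement to compare against.

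The genuine gap in your proposal is that you stop at a heuristic. You argue that outside the hyperelliptic locus the per-fiber Meyer contributions need not average to at least $-\tfrac{g+1}{2g+1}$, and you suggest \emph{searching} for factorizations violating the bound; but you produce no such factorization. The paper supplies one explicitly, and the mechanism is clean even in your signature language. Start from the hyperelliptic chain relator $h_g$, which sits exactly on the slope line: $\sigma(X_{h_g}) = -\tfrac{g+1}{2g+1}\,n(h_g)$. Take a fiber sum of three conjugated copies (still on the line), then perform a single \emph{inverse} lantern substitution, replacing $t_{x_1}t_{x_2}t_{x_3}$ by $t_{d_1}t_{d_2}t_{d_3}t_{d_4}$. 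By Proposition~\ref{EN} this sends $(n,\sigma)\mapsto(n+1,\sigma-1)$, so the quantity $(2g+1)\sigma + (g+1)n$ drops by exactly $g>0$ and becomes negative. Equivalently $\chi_f$ is unchanged while $K_f^2$ drops by $1$, giving $\lambda_f = 4 - 4/g - 1/(3g) < 4 - 4/g$. The three conjugated copies are needed only to make the curves $x_1,x_2,x_3$ of a lantern configuration appear simultaneously as vanishing cycles, after elementary transformations, so that the substitution can be carried out inside a positive relator. That concrete construction is what your argument is missing.
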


We can reformulate the slope inequality of Conjecture~\ref{slope} in terms of the Deligne-Mumford compactified moduli space of stable curves of genus $g$, denoted by $\overline{\mathcal{M}_g}$, as follows. 
For a relatively minimal genus-$g$ Lefschetz fibration $f:X\rightarrow S^2$ with $n$ singular fibers, 
we can obtain a symplectic structure on $X$ such that for all $x\in S^2$, $f^{-1}(x)$ is a pseudo-holomorphic curve.
Since a 2-dimensional almost-complex structure is integrable, $f^{-1}(x)$ determines a point in $\overline{{\mathcal M}_g}$. 
Thus, we can obtain the moduli map $\phi_f:S^2\rightarrow \overline{{\mathcal M}_g}$ which is defined by $\phi_{f}(x)=[f^{-1}(x)]\in\overline{\mathcal{M}_g}$ for $x\in S^{2}$. 
We denote by ${\mathcal H}_g$ the Hodge bundle on $\overline{\mathcal{M}_g}$ with fiber the determinant line $\wedge^{g}H^{0}(C;K_{C})$, where $C$ is the set of critical points of $f$. 
By using Smith's signature formula \cite{Sm2}, we have the following inequality which is equivalent to the slope inequality of Conjecture~\ref{slope}. 
\begin{eqnarray*}
(8g+4)\langle c_1({\mathcal H}_g),[\phi_f(S^2)] \rangle - g\cdot n \geq 0
\end{eqnarray*}


We give a negative answer to Conjecture~\ref{slope}.
\begin{thm}\label{nonholo}
For $g\geq 3$, there exist a genus-$g$ Lefschetz fibration over $S^2$ with slope $\lambda_f=4-1/g-1/3g$ 
whose total space is simply connected and non-spin. 
\end{thm}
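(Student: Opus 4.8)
The plan is to begin with a hyperelliptic genus-$g$ Lefschetz fibration of slope exactly $4-4/g$ and to \emph{raise} its slope up to $4-1/g-1/(3g)=4-4/(3g)$ by performing lantern substitutions, each of which realizes a rational blowdown of a $(-4)$-sphere in the total space. Writing $\sigma$ for the signature and $n$ for the number of singular fibers, $e(X)=-4(g-1)+n$ together with $K^2=3\sigma+2e$ and $\chi_h=(\sigma+e)/4$ gives
\begin{equation*}
\chi_f=\frac{\sigma+n}{4},\qquad K_f^2=3\sigma+2n,\qquad \lambda_f=\frac{12\sigma+8n}{\sigma+n}.
\end{equation*}
Hence $\lambda_f=4-4/(3g)$ is equivalent to $\sigma/n=-(3g+1)/(6g+1)$, and I will realize $(\sigma,n)=(-(3g+1)k,(6g+1)k)$ for a suitable integer $k$. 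Since a Lefschetz fibration over $S^2$ is symplectic one has $b_2^+\ge 1$, which forces $k\ge 2$ when $g\ge 3$.

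First I would fix the base. Let $Y_0$ be a relatively minimal hyperelliptic genus-$g$ Lefschetz fibration all of whose vanishing cycles are nonseparating; by the fact recalled above it has slope exactly $4-4/g$, equivalently $\sigma_0=-\frac{g+1}{2g+1}N$, where $N$ is its number of singular fibers. Fiber-summing copies of a standard hyperelliptic word---for instance the factorization $(t_1\cdots t_{2g}t_{2g+1}^2t_{2g}\cdots t_1)^2$, which contributes $4(2g+1)$ singular fibers---lets me realize $N=3(2g+1)k$ for infinitely many $k\ge 2$ (e.g. $3m$ copies give $N=3(2g+1)(4m)$) while keeping $\sigma_0=-\frac{g+1}{2g+1}N$ and keeping the total space simply connected. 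The essential requirement is to present $Y_0$ by a positive factorization containing $2k$ pairwise disjoint sub-words, each equal to the four-boundary side $t_{d_1}t_{d_2}t_{d_3}t_{d_4}$ of a lantern relation supported in an embedded four-holed sphere in $\Sigma_g$.

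Next I would carry out the substitutions. Replacing a block $t_{d_1}t_{d_2}t_{d_3}t_{d_4}$ by the three-twist side $t_at_bt_c$ is a monodromy substitution, so it yields a genuine Lefschetz fibration, and on total spaces it is the rational blowdown of the associated $(-4)$-sphere; hence each substitution changes the invariants by $\Delta n=-1$ and $\Delta\sigma=+1$. Introducing $Q:=(8g+4)\langle c_1(\CH_g),[\phi_f(S^2)]\rangle-g\,n=(2g+1)\sigma+(g+1)n$ (the quantity of Smith's formula recalled above, using $\langle c_1(\CH_g),[\phi_f(S^2)]\rangle=\chi_f$), so that $Q=0$ is equivalent to slope $4-4/g$ and $\Delta Q=+g$ at each substitution, the bookkeeping forces exactly $L=2k$ substitutions on the base with $N=3(2g+1)k$ fibers; after them $(\sigma,n)=(-(3g+1)k,(6g+1)k)$ and $\lambda_f=4-4/(3g)$. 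Relative minimality is preserved because the three new vanishing cycles are again nonseparating, so no singular fiber acquires a sphere component.

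Finally I would control the topology of the total space $X$. As the base is $S^2$, one has $\pi_1(X)=\pi_1(\Sigma_g)/\langle\langle\,\text{vanishing cycles}\,\rangle\rangle$; the nonseparating cycles of $Y_0$ already normally generate $\pi_1(\Sigma_g)$, and after substitution the triples $a,b,c$ lie in the same four-holed spheres, so together with the untouched cycles they continue to normally generate, giving $\pi_1(X)=1$. For non-spinness I would exhibit a homology class of odd self-intersection surviving the pairwise disjoint rational blowdowns---most cleanly by choosing the base factorization so that $Y_0$ carries an odd-square section disjoint from every lantern region---whence $X$ is non-spin. The hard part is exactly this base: producing a hyperelliptic positive factorization with $N=3(2g+1)k$ nonseparating twists that genuinely contains $2k$ disjoint lantern sub-words and is simultaneously compatible with the normal-generation (simple connectivity) and odd-class (non-spin) conditions. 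Once the base and its disjoint lanterns are in place, the signature accounting and the slope computation are routine.
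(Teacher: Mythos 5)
Your proposal runs the lantern substitution in the wrong direction, and this is fatal to the theorem's purpose. The slope in the statement is a typo: the paper's own proof computes $\lambda_{f_{r_g}}=4-4/g-1/(3g)$, which is \emph{below} the slope line $4-4/g$ --- necessarily so, since the theorem exists to give a negative answer to Hain's conjecture $\lambda_f\geq 4-4/g$. You read the printed value literally as $4-1/g-1/(3g)=4-4/(3g)>4-4/g$ and accordingly perform forward lantern substitutions $t_{d_1}t_{d_2}t_{d_3}t_{d_4}\mapsto t_{x_1}t_{x_2}t_{x_3}$ (rational blowdowns), each of which keeps $\chi_f$ fixed and \emph{raises} $K_f^2$ by $1$. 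Starting from a base of slope $4-4/g$, such moves can only increase the slope, so your construction can never violate the slope inequality and cannot prove what the theorem is for. The paper does the opposite: it assembles the three interior twists $t_{x_1}t_{x_2}t_{x_3}$ and applies the lantern relation \emph{in reverse} (an $L^{-1}$-substitution, a rational blow-up in the sense of Endo--Gurtas), replacing three twists by the four boundary twists $t_{c_1}t_{c_3}t_{c_5}t_{y}$; by the Endo--Nagami signature computation (Proposition~2.4 of the paper) this keeps $\chi_f$ and lowers $K_f^2$ by $1$, so one reverse substitution on the fiber sum $r_1r_2r_3$ of three conjugates of $h_g$ yields $\chi_f=3g$, $K_f^2=3(4g-4)-1$, hence $\lambda_f=4-4/g-1/(3g)$.

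Even granting your literal reading of the target slope, the proposal is incomplete at exactly the step you yourself flag as ``the hard part'': you never exhibit a positive factorization with $N=3(2g+1)k$ nonseparating twists genuinely containing $2k$ disjoint sub-words $t_{d_1}t_{d_2}t_{d_3}t_{d_4}$ supported in embedded four-holed spheres. The $(\sigma,n)$ bookkeeping, which you carry out correctly, is the routine part; the existence of the required monodromy factorization is the entire content, and the paper's corresponding work is concrete: choose diffeomorphisms $\phi_i$ with $\phi_i(c_1)=x_i$, so each summand of $r_1r_2r_3=\phi_1 h_g\phi_1^{-1}\cdot\phi_2 h_g\phi_2^{-1}\cdot\phi_3 h_g\phi_3^{-1}$ contributes one twist $t_{x_i}$, then shuffle these together by explicit elementary transformations. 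Similarly, your $\pi_1=1$ and non-spin claims are schematic (the ``odd-square section disjoint from every lantern region'' is not constructed), whereas the paper gets $\pi_1(X_{r_g})=1$ from Lemma~3.2(c) of Amor\'os--Bogomolov--Katzarkov--Pantev because the leftover word $W$ still contains all twists $t_{\phi_3(c_i)}$, and gets non-spin for free from Rohlin's theorem since $\sigma(X_{r_g})=-12(g+1)-1\not\equiv 0 \bmod 16$.
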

Moreover, by fiber sum operations, we have the following results: 
\begin{cor}\label{nonholo2}
For each $g\geq 3$, $m\geq 0$ and $l\geq 0$, there a genus-$g$ Lefschetz fibration $f_{m,l} : X_{m,l}\rightarrow S^2$ with 
slope $\lambda_{f_{m,l}}=4-4/g-1/(m+3)g$ s.t. $\pi_1(X_{m,l})=1$. 
Moreover, if $(m,l)\neq (0,0)$ (resp. $l\not\equiv 0 \ {\rm mod} \ 16$), then $X_{m,l}$ is minimal (resp. non-spin) symplectic 4-manifold. 
\end{cor}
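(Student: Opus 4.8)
The plan is to build $X_{m,l}$ out of the fibration $X$ of Theorem~\ref{nonholo} by iterated fiber summation, so the whole argument rests on controlling the relative invariants under fiber sum. First I would record their additivity. If $X_1\#_f X_2$ denotes the fiber sum of two relatively minimal genus-$g$ Lefschetz fibrations over $S^2$ along a regular fiber, then Novikov additivity gives $\sigma(X_1\#_f X_2)=\sigma(X_1)+\sigma(X_2)$, while the number of singular fibers adds, so $e(X_1\#_f X_2)=e(X_1)+e(X_2)+4(g-1)$. Substituting these into $\chi_f=(\sigma+e)/4+(g-1)$ and $K_f^2=3\sigma+2e+8(g-1)$ shows that both $\chi_f$ and $K_f^2$ are additive under $\#_f$. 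Consequently the slope of a fiber sum is the $\chi_f$-weighted mediant of the slopes of the summands; in particular, fiber-summing fibrations that all share one common slope preserves that slope.

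Next I would tune the slope with $m$. Take one copy of $X$ from Theorem~\ref{nonholo}, whose slope is strictly below $4-4/g$, and fiber-sum it with $m$ copies of a genus-$g$ hyperelliptic Lefschetz fibration $B$ with only nonseparating vanishing cycles, whose slope is exactly $4-4/g$ as recorded in the Introduction. By additivity the slope of $X\#_f mB$ is
\[
\frac{K_f^2(X)+m\,K_f^2(B)}{\chi_f(X)+m\,\chi_f(B)}
=\left(4-\frac{4}{g}\right)-\frac{\chi_f(X)}{3g\big(\chi_f(X)+m\,\chi_f(B)\big)} .
\]
Choosing the building blocks so that $\chi_f(X)=3\,\chi_f(B)$ (equivalently $K_f^2(X)=3K_f^2(B)-\chi_f(B)/g$, i.e.\ $X$ has slope $4-4/g-1/3g$) collapses the last term to $1/((m+3)g)$, yielding exactly $4-4/g-1/(m+3)g$. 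Verifying that $X$ and $B$ can be normalized to this ratio, with all resulting relative invariants the correct integers, is the core slope computation.

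I would then bring in $l$ by fiber-summing with $l$ further copies of a genus-$g$ fibration of the \emph{same} slope $4-4/g-1/(m+3)g$; by the mediant remark this leaves $\lambda_{f_{m,l}}$ unchanged while altering $\sigma$ and $e$. For the fundamental group I would use that $X$ is already simply connected, so the vanishing cycles of $X$ alone normally generate $\pi_1$ of a regular fiber; since a fiber sum only adjoins further relations, van Kampen gives $\pi_1(X_{m,l})=1$. For minimality, when $(m,l)\neq(0,0)$ the space $X_{m,l}$ is a fiber sum of at least two nontrivial genus $\geq 2$ Lefschetz fibrations, so Usher's theorem forces it to be minimal. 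Finally the spin type follows from Rokhlin's theorem: computing $\sigma(X_{m,l})$ by Novikov additivity and arranging the $l$-summand to shift the signature by a unit mod $16$, one gets $\sigma(X_{m,l})\not\equiv 0\pmod{16}$ exactly when $l\not\equiv 0\pmod{16}$, which forces an odd intersection form and hence a non-spin manifold.

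The hard part will be the simultaneous bookkeeping. The delicate points are: pinning down concrete building blocks whose relative invariants realize the exact ratio $\chi_f(X)=3\chi_f(B)$ that produces the clean value $4-4/g-1/(m+3)g$; and producing an $l$-summand that preserves this (now $m$-dependent) slope while moving $\sigma$ by a unit mod $16$, so that the non-spin criterion reads precisely $l\not\equiv 0\pmod{16}$. Tracking $\sigma \bmod 16$ through all the fiber sums, and checking that the slope-preserving $l$-summand exists for every $g\geq 3$, is where I expect the real work to lie.
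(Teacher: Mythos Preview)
Your overall strategy is exactly the paper's: form $X\#_f mB$ with $X=X_{r_g}$ from Theorem~\ref{nonholo} and $B=X_{h_g}$, then iterate. But you have misidentified where the work lies. The ratio $\chi_f(X)=3\,\chi_f(B)$ is not something to be arranged or verified: the proof of Theorem~\ref{nonholo} builds $r_g$ by a single $L^{-1}$-substitution on a fiber sum of three conjugates of $h_g$, so Proposition~\ref{EN} gives $\chi_{f_{r_g}}=3\chi_{f_{h_g}}=3g$ and $K^2_{f_{r_g}}=3K^2_{f_{h_g}}-1=3(4g-4)-1$ on the nose. Your displayed formula then collapses immediately.

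For the $l$-parameter you are looking for a separate slope-preserving summand with controlled signature mod~$16$; this is unnecessary. The paper simply sets $X_{m,l}=(X_{r_g}h_g^m)^l$, the fiber sum of $l$ copies of the single block $X_{r_g h_g^m}$. That block already has the target slope, so $l$ copies do too, and its signature is $\sigma(X_{r_g h_g^m})=-4(g+1)(m+3)-1$, which is \emph{odd}. Hence $\sigma(X_{m,l})=l\cdot\bigl(-4(g+1)(m+3)-1\bigr)$ is divisible by $16$ iff $l\equiv 0\pmod{16}$, and Rokhlin gives the non-spin conclusion with no further bookkeeping. The ``real work'' you anticipate does not exist; once Theorem~\ref{nonholo} is in hand, the corollary is a direct computation.
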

\begin{cor}\label{nonholo3}
For each $g\geq 3$, $m\geq 1$ and $l\geq 0$, there exist a genus-$g$ Lefschetz fibration $f^\prime_{m,l} : Y_{m,l}\rightarrow S^2$ with 
slope $\lambda_{f^\prime_{m,l}}=4-4/g-1/2g+1/(2\cdot 3^{m-1}g)$ s.t. $\pi_1(Y_{m,l})=1$. 
Moreover, If $l\geq 1$ (resp. 
$m$ is even and $l\not\equiv 0 \ {\rm mod} \ 16$), then $Y_{m,l}$ is minimal (resp. non-spin) symplectic 4-manifold. 
\end{cor}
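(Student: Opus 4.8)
The plan is to produce every $Y_{m,l}$ as an (untwisted) fiber sum of two standard genus-$g$ building blocks: the fibration $W$ furnished by Theorem~\ref{nonholo}, which carries the small slope and is simply connected and non-spin, and a model hyperelliptic fibration $V$ whose vanishing cycles are all nonseparating, whose slope is therefore exactly $4-4/g$ as recalled in the introduction, and whose total space we may arrange to be simply connected. The one structural fact needed throughout is the behaviour of the relative invariants under fiber sum: combining $e(X)=-4(g-1)+n$ with Novikov additivity of the signature, one checks that both $\chi_f$ and $K_f^2$ are additive when genus-$g$ fibrations are fiber-summed. Hence the slope of a fiber sum is the $\chi_f$-weighted average of the slopes of its summands, and this is the only input used to pin down $\lambda_{f'_{m,l}}$.

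Next I would organize the iteration that produces the factor $3^{m-1}$. Writing the target slope as $\left(4-\tfrac{9}{2g}\right)+\tfrac{1}{2\cdot 3^{m-1}g}$, the deviation $f_m:=\lambda-\left(4-\tfrac{9}{2g}\right)$ from the limiting slope must obey $f_m=\tfrac{1}{3}f_{m-1}$. Because a fiber sum averages slopes with weights $\chi_f$, cutting this deviation by a factor $3$ at each stage forces the accumulated $\chi_f$ — equivalently the total number of vanishing cycles — to triple, so that the $m$-th fibration is a fiber sum of $3^{m-1}$ elementary pieces. A single induction on $m$, using only the additivity above, then verifies $\lambda=4-4/g-1/(2g)+1/(2\cdot 3^{m-1}g)$ exactly. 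Finally, fiber-summing on $l$ additional pieces chosen to have this same slope leaves $\lambda$ unchanged while increasing the number of singular fibers; this is the role of the parameter $l$.

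It remains to check the three supplementary assertions. For $\pi_1(Y_{m,l})=1$ I would run the van Kampen computation for a fiber sum: $\pi_1$ of the total space is the quotient of $\pi_1(\Sigma_g)$ by the subgroup normally generated by all vanishing cycles, and since each building block is already simply connected its own cycles kill $\pi_1(\Sigma_g)$, so a fortiori the fiber sum is simply connected. Minimality for $l\ge 1$ follows from Usher's theorem that a relatively minimal genus $\ge 2$ Lefschetz fibration expressed as a nontrivial fiber sum is minimal; the hypothesis $l\ge 1$ guarantees at least two nontrivial summands. For the non-spin claim I would compute $\sigma(Y_{m,l})$ modulo $16$: by Rokhlin's theorem a closed spin $4$-manifold has signature divisible by $16$, so it suffices to show $\sigma\not\equiv 0\pmod{16}$. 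Each summand contributes a fixed amount to the signature, and the parity condition on $m$ together with $l\not\equiv 0\pmod{16}$ is precisely what makes the total signature indivisible by $16$.

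The genuine obstacle is none of these verifications but the construction of the elementary block of slope exactly $4-\tfrac{9}{2g}$ that governs the limit. This fibration must violate the slope inequality by the maximal amount among the pieces, must have its $\chi_f$ matched to that of $V$ so that the weighted averages land exactly on the stated values, and must have simply connected total space. Producing it — presumably via an explicit positive relator in the genus-$g$ mapping class group built on the relator underlying Theorem~\ref{nonholo} — and then checking that the slope bookkeeping closes up cleanly under the tripling iteration is where the real difficulty lies; by comparison the fundamental group, minimality, and spin-type statements are routine once the blocks are available.
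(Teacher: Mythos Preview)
Your proposal has a genuine gap: fiber sums alone cannot produce the required slopes. Since a fiber sum gives a $\chi_f$-weighted average of the slopes of its pieces, any fiber sum of copies of $W$ (slope $4-4/g-1/(3g)$) and $V$ (slope $4-4/g$) has slope in the interval $[\,4-4/g-1/(3g),\ 4-4/g\,]$. But already for $m=3$ the target slope is
\[
4-\tfrac{4}{g}-\tfrac{1}{2g}+\tfrac{1}{18g}=4-\tfrac{4}{g}-\tfrac{4}{9g}<4-\tfrac{4}{g}-\tfrac{1}{3g},
\]
strictly below that interval. You correctly sense this and posit an ``elementary block of slope exactly $4-9/(2g)$'' to average against, but no such block is constructed in the paper (and it is not clear one exists); this is not where the construction comes from.

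What the paper actually does is iterate the \emph{entire} construction of Theorem~\ref{nonholo}, not just fiber-sum its output. Set $\varrho_1=h_g$ and $\varrho_2=r_g$; then, replacing $h_g$ by $\varrho_{m-1}$ in the proof of Theorem~\ref{nonholo}, take three conjugated copies of $\varrho_{m-1}$, fiber-sum them, rearrange by elementary transformations, and apply one $L^{-1}$-substitution. This yields $\varrho_m$ with
\[
\chi_{f_{\varrho_m}}=3\,\chi_{f_{\varrho_{m-1}}},\qquad K^2_{f_{\varrho_m}}=3\,K^2_{f_{\varrho_{m-1}}}-1,
\]
so $\chi_{f_{\varrho_m}}=3^{m-1}g$ and $K^2_{f_{\varrho_m}}=3^{m-1}(4g-4)-(3^{m-1}-1)/2$, giving exactly the stated slope. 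The essential mechanism you are missing is the lantern ($L^{-1}$) substitution at each stage, which drops $K_f^2$ by $1$ while leaving $\chi_f$ unchanged; this is what pushes the slope below that of any previously constructed block and is invisible to a pure fiber-sum argument. Once the $\varrho_m$ are in hand, $Y_{m,l}$ is the fiber sum of $l$ copies of $f_{\varrho_m}$, and your treatment of $\pi_1$, minimality (via Usher), and the non-spin condition (via Rokhlin and the explicit signature $\sigma(Y_{m,l})=l\{-4(g+1)3^{m-1}-(3^{m-1}-1)/2\}$) is then correct.
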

As a consequence, the Lefschetz fibrations in Theorem~\ref{nonholo}, Collorary~\ref{nonholo2} and~\ref{nonholo3} are non-holomorphic (Corollary~\ref{nonholomorphic}).

We have the following natural question: what Lefschetz fibrations satisfy the slope inequality ? 
By combining the results of \cite{St}, \cite{St2} and \cite{Li}, we can show that Lefschetz fibrations with $b_2^+=1$ satisfy the slope inequality. 
Stipsictz showed that if $X\rightarrow S^2$ is a relatively minimal genus-$g$ Lefschetz fibration over $S^2$ with $b_+^2(X)=1$ 
and $X$ is not diffeomorphic to the blow-up of a ruled surface (i.e., diffeomorphic to a $S^2$-bundle over $\Sigma_k$), 
then $b_1(X)\in\{0,2\}$ and $e\geq 0$ (see \cite{St}, Corollary 3.3 and 3.5). 
In particular, if $X$ is the blow-up of a $S^2$-bundle over $\Sigma_k$, then $k\leq g/2$ (see \cite{Li}, Proposition 4.4). 
Then, we obtain the following result. 
\begin{thm}\label{lower}
Let $f:X\rightarrow S^2$ be a genus-$g$ Lefschetz fibration with $b_2^+(X)=1$ for $g\geq 2$. 
Suppose that $X$ is not diffeomorphic to the blow-up of a ruled surface. 

{\rm (1)} If $b_1(X)=0$, then $4-4/g\leq \lambda_f\leq 8+1/g$. 

{\rm (2)} If $b_1(X)=2$, then $4-4/g\leq \lambda_f\leq 8$. \\[0.5mm]
Suppose that $X$ is the blow-up of a $S^2$-bundle over $\Sigma_k$ $(0\leq k\leq g/2)$. 

{\rm (3)} $(4-4/g\leq) \ 4+4(k-1)/(g-k)\leq \lambda_f\leq 8$. This lower bound is sharp. 

\end{thm}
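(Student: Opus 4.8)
The plan is to reduce the whole statement to a single formula for $\lambda_f$ in terms of the Betti numbers of $X$ and then to read off each inequality from exactly one of the constraints already quoted in the introduction ($K_f^2\geq 4g-4$, $e\geq 0$, $b_2^-\geq 0$). First I would record the numerical identities. Assuming $f$ relatively minimal (so that the invariants are defined), and writing $b:=b_2^-(X)$, the hypothesis $b_2^+(X)=1$ gives $e(X)=3-2b_1+b$ and $\sigma(X)=1-b$, hence $\chi_h=1-b_1/2$ and $K^2=9-4b_1-b$. Since the base is $S^2$ (genus $0$), the relative invariants become
\begin{equation*}
\chi_f=g-\tfrac{b_1}{2},\qquad K_f^2=8g+1-4b_1-b,\qquad \lambda_f=\frac{8g+1-4b_1-b}{\,g-\tfrac{b_1}{2}\,}.
\end{equation*}
One checks $\chi_f>0$ in each of the three cases, so $\lambda_f$ is defined throughout.

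For the lower bounds I would use only $K_f^2\geq 4g-4$ together with $\chi_f=g-b_1/2\leq g$: then $\lambda_f\geq (4g-4)/\chi_f\geq (4g-4)/g=4-4/g$, which is the lower bound claimed in (1) and (2) and holds uniformly. In case (3), where $X=S\#\,r\,\CPb$ with $S$ an $S^2$-bundle over $\Sigma_k$, one has $b_1=2k$ and $b=1+r$, so $\chi_f=g-k$ \emph{exactly}; dividing $K_f^2\geq 4g-4$ by $g-k$ yields $\lambda_f\geq (4g-4)/(g-k)=4+4(k-1)/(g-k)$, and because $g-k\leq g$ this is indeed $\geq 4-4/g$.

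For the upper bounds I would invoke Stipsicz's structure theorem case by case. In (1), $b_1=0$ and $b\geq 0$ force $\lambda_f=(8g+1-b)/g\leq 8+1/g$. In (2), $b_1=2$, and the inequality $e\geq 0$ from \cite{St} reads $e=b-1\geq 0$, i.e.\ $b\geq 1$, whence $\lambda_f=(8g-7-b)/(g-1)\leq (8g-8)/(g-1)=8$. In (3), $K_f^2=8g-8k-r\leq 8(g-k)=8\chi_f$ because $r\geq 0$, so again $\lambda_f\leq 8$. Thus every bound in the statement is forced immediately by one nonnegativity constraint, and Li's inequality $k\leq g/2$ \cite{Li} guarantees $g-k\geq g/2>0$ so that the displayed fractions in (3) make sense and the interval $[\,4+4(k-1)/(g-k),\,8\,]$ is nonempty.

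The remaining, genuinely substantive point is the sharpness assertion in (3): the existence, for each $0\leq k\leq g/2$, of a relatively minimal genus-$g$ Lefschetz fibration whose total space is a blow-up of an $S^2$-bundle over $\Sigma_k$ attaining $K_f^2=4g-4$ (equivalently $r=4(g-2k+1)$, equivalently $\lambda_f=(4g-4)/(g-k)$). For $k=0$ I would realize this by the hyperelliptic fibration coming from the chain relation with $8g+4$ nonseparating vanishing cycles, whose total space is a blow-up of a rational ruled surface (a direct count gives $b=4g+5$, hence $r=4(g+1)$, matching the bound). I expect the hard part to be the range $1\leq k\leq g/2$: here one must exhibit the ruled-over-$\Sigma_k$ analogues and verify that the total space really is a blow-up of an $S^2$-bundle over $\Sigma_k$ with precisely $b_2^-=1+4(g-2k+1)$, since — unlike the numerical bounds, which are automatic from the quoted inequalities — pinning down the diffeomorphism type of the total space of an explicitly constructed fibration is where the actual work lies.
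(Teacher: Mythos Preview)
Your numerical arguments for the six inequalities are correct and coincide with the paper's proof essentially line for line: the same formula $\lambda_f=(8g+1-4b_1-b_2^-)/(g-b_1/2)$, the same use of $K_f^2\geq 4g-4$ for the lower bounds, of $b_2^-\geq 0$ for the upper bound in (1), of $e\geq 0$ (i.e.\ $b_2^-\geq 1$) for the upper bound in (2), and of $r\geq 0$ for the upper bound in (3). The only cosmetic difference is that for the lower bound in (3) the paper cites the inequality $4(2k-g)+m\leq 4$ from \cite{St2}, which is nothing other than $K_f^2\geq 4g-4$ rewritten for blow-ups of $S^2$-bundles, so your uniform use of $K_f^2\geq 4g-4$ is equivalent.

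The one place your proposal is incomplete is exactly the one you flag: sharpness in (3) for $1\leq k\leq g/2$. Here the paper does not carry out any construction of its own; it simply invokes Fintushel--Stern \cite{FS2}, who showed that $(\Sigma_k\times S^2)\#\,4m\,\CPb$ admits a genus-$(2k+m-1)$ Lefschetz fibration over $S^2$. Setting $m=g-2k+1$ gives a genus-$g$ fibration on a blow-up of $\Sigma_k\times S^2$ with exactly $r=4(g-2k+1)$ exceptional spheres, hence $\lambda_f=4+4(k-1)/(g-k)$. So the ``hard part'' you anticipate is in fact a single citation; the diffeomorphism type of the total space is known by construction in \cite{FS2}, not something the paper verifies independently. (Your $k=0$ example via the chain relation $h_g$ is fine, but it too is subsumed by the Fintushel--Stern family.)
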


\vspace{0.1in}
\noindent \textit{Acknowledgments.} The author would like to thank 
Hisaaki Endo for pointing out 
that the author's idea is effective in constructing a Lefschetz fibration which violates the slope inequality 
and for his encouragement. 
The author would like to express his gratitude to 
Kazuhiro Konno for his kind explanation of the history of the slope, and Mustafa Korkmaz and Kouich Yasui for their comments on this paper.

\section{Preliminaries} \
Let $\Sigma_g$ be a closed oriented surface of genus $g\geq 2$ 
and let $\Gamma_g$ be the mapping class group of $\Sigma_g$.
We denote by $t_c$ the right handed Dehn twist about a simple closed curve $c$ 
on an oriented surface. 
$t_ct_d$ means that we first apply $t_d$ then $t_c$.

We begin by recalling the definition and basic properties of Lefschetz fibrations.
(More details can be found in \cite{GS}.) 
\begin{defn}\label{LF}\rm
Let $X$ be a closed, oriented smooth $4$-manifold. 
A smooth map $f : X \rightarrow S^2$ is a genus-$g$ Lefschetz fibration 
if it satisfies the following condition : \\
(i) $f$ has finitely many critical values $b_1,\ldots,b_n \in S^2$, and 
$f$ is a smooth $\Sigma_g$-bundle over $S^2-\{b_1,\ldots,b_n\}$, \\
(ii) for each $i$ $(i=1,\ldots,n)$, there exists a unique critical point $p_i$ 
in the \textit{singular fiber} $f^{-1}(b_i)$ such that about each $b_{i}$ and 
$f^{-1}(b_{i})$ there are complex local coordinate charts agreeing with 
the orientations of $X$ and $S^2$ on which $f$ is of the form 
$f(z_{1},z_{2})=z_{1}^{2}+z_{2}^{2}$, \\
(i\hspace{-.1em}i\hspace{-.1em}i) $f$ is relatively minimal (i.e. no fiber contains a $(-1)$-sphere.)

A Lefschetz fibration $f : X \rightarrow S^2$ is \textit{holomorphic} if there are complex structures
on both $X$ and $S^2$ with holomorphic projection $f$.
\end{defn}
Each singular fiber is obtained by collapsing a simple closed curve (the \textit{vanishing cycle}) 
in the regular fiber. 
The monodromy of the fibration around a singular fiber is given by 
a right handed Dehn twist along the corresponding vanishing cycle.

Once we fix an identification of $\Sigma_{g}$ with the fiber over a base point of $S^2$, 
we can characterize the Lefschetz fibration  $f:X\rightarrow S^2$ by its \textit{monodromy representation} 
$\pi_{1}(S^2-\{b_1,\ldots,b_n\})\rightarrow \Gamma_{g}$. 
Let $\gamma_1,\ldots,\gamma_n$ be an ordered system of generating loops for $\pi_{1}(S^2-\{b_1,\ldots,b_n\})$, 
such that each $\gamma_i$ encircles only $b_i$ and $\prod \gamma_i$ is homotopically trivial. 
Thus, the monodromy of $f$ 
comprises a factorization 
\begin{eqnarray*}
t_{v_1}t_{v_2}\cdots t_{v_n}=1\in \Gamma_g, 
\end{eqnarray*}
where $v_i$ are 
vanishing cycles of the singular fibers. 
This factorization is called the \textit{positive relator}.

According to theorems of Kas \cite{Kas} and Matsumoto \cite{Ma}, 
if $g\geq 2$, then 
the isomorphism class of a Lefschetz fibration is determined by a positive relator 
modulo simultaneous conjugations 
\begin{eqnarray*}
t_{v_1}t_{v_2}\cdots t_{v_n} \sim t_{\phi(v_1)}t_{\phi(v_2)}\cdots t_{\phi(v_n)} \ \ {\rm for \ all} \ \phi \in \Gamma_g
\end{eqnarray*}
and elementary transformations 
\begin{eqnarray*}
t_{v_1}\cdots t_{v_i}t_{v_{i+1}}t_{v_{i+2}}\cdots t_{v_n} \sim t_{v_1}\cdots t_{v_{i+1}}t_{t_{v_{i+1}}^{-1}(v_i)}t_{v_{i+2}}\cdots t_{v_n},\\
t_{v_1}\cdots t_{v_i}t_{v_{i+1}}t_{v_{i+2}}\cdots t_{v_n} \sim t_{v_1}\cdots,t_{t_{v_i}(v_{i+1})}t_{v_i}t_{v_{i+2}}\cdots t_{v_n}.
\end{eqnarray*}
Note that $\phi t_{v_i}\phi^{-1}=t_{\phi(v_i)}$ and $t_{v_{i+1}}^{-1}t_{v_i}t_{v_{i+1}}=t_{t_{v_{i+1}}^{-1}(v_i)}$. 
For all $\phi\in \Gamma_g$, let $\phi(\varrho)$ be the positive relator which is obtained by applying simultaneous conjugations by $\phi$ to a positive relator $\varrho$. 
We denote a Lefschetz fibration associated to a positive relator $\varrho \ \in \Gamma_g$ by $f_\varrho:X_\varrho \rightarrow S^2$. 
Clearly, if $\varrho_1\sim\varrho_2$ in $\Gamma_g$ (i.e. $\varrho_2$ is obtained by applying elementary transformations or simultaneous conjugations to $\varrho_1$)
, then $\chi_{f_{\varrho_1}}=\chi_{f_{\varrho_2}}$ and $K_{f_{\varrho_1}}^2=K_{f_{\varrho_2}}^2$.

For positive relators $\varrho_1$ and $\varrho_2$ in $\Gamma_g$, the genus-$g$ Lefschetz fibration $f_{\varrho_1\varrho_2}:X_{\varrho_1\varrho_2}\rightarrow S^2$ is 
the (trivial) fiber sum of $f_{\varrho_1}$ and $f_{\varrho_2}$. 
Since $\sigma(X_{\varrho_1\varrho_2})=\sigma(X_{\varrho_1})+\sigma(X_{\varrho_2})$ and $e(X_{\varrho_1\varrho_2})=e(X_{\varrho_1})+e(X_{\varrho_2})+4(g-1)$, 
we see $\chi_{f_{\varrho_1\varrho_2}}=\chi_{f_{\varrho_1}}+\chi_{f_{\varrho_2}}$ and $K_{f_{\varrho_1\varrho_2}}^2=K_{f_{\varrho_1}}^2+K_{f_{\varrho_2}}^2$. 
In particular, if $\varrho_1\sim\varrho_2$, then $\chi_{f_{\varrho_1\varrho_2}}=2\chi_{f_{\varrho_1}}(=2\chi_{f_{\varrho_2}})$ and $K_{f_{\varrho_1\varrho_2}}^2=2K_{f_{\varrho_1}}^2(=2K_{f_{\varrho_2}}^2)$.

We next begin with a definition of the lantern relation \ (see \cite{De}, \cite{Jo}).\\
\begin{figure}[h]
 \begin{center}
      \includegraphics[width=3cm]{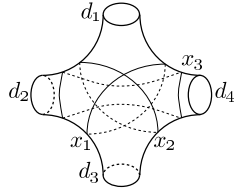}
      \caption{The curves $d1,d_2.d_3,d_4,x_1,x_2,x_3$.}
      \label{LR}
   \end{center}
 \end{figure}
\begin{defn}\label{lantern}\rm
Let $\Sigma_0^{4}$ denote a sphere with $4$ boundary components. 
Let $d$, $d_{1}$, $d_{2}$, $d_{3}$, $d_4$ be the $4$ boundary curves of $\Sigma_0^{4}$ 
and let $x_1$, $x_2$, $x_3$ be the interior curves as shown in Figure~\ref{LR}. 
Then, we have the \textit{lantern relation}
\begin{eqnarray*}
t_{d_1}t_{d_2}t_{d_3}t_{d_4}=t_{x_1}t_{x_2}t_{x_3}.
\end{eqnarray*}
\end{defn}
Let $\varrho$ be a positive relator of $\Gamma_g$. 
Let $d_1$, $d_2$, $d_3$, $d_4$, $x_1$, $x_2$, $x_3$ be curves as in Definition~\ref{lantern}. 
Suppose that $\varrho$ includes $t_{d_1}t_{d_2}t_{d_3}t_{d_4}$ as a subword : 
\begin{eqnarray*}
\varrho=U\cdot t_{d_1}t_{d_2}t_{d_3}t_{d_4} \cdot V, 
\end{eqnarray*}
where $U$ and $V$ are products of right handed Dehn twists. 
Then, by the lantern relation, 
the product of right handed Dehn twists 
\begin{eqnarray*}
\varrho^{\prime}=U\cdot t_{x_1}t_{x_2}t_{x_3} \cdot V 
\end{eqnarray*}
is also a positive relator of $\Gamma_g$.

This operation is one of substitution techniques introduced by Fuller. 
\begin{defn}\rm
We say that $\varrho^{\prime}$ is obtained by applying an $L$-\textit{substitution} to $\varrho$. 
Conversely, $\varrho$ is said to be obtained by applying an $L^{-1}$-\textit{substitution} to $\varrho^{\prime}$. 
We also call these two kinds of operations \textit{lantern substitutions}. 
\end{defn}
\begin{prop}[Endo and Nagami, \cite{EN}, Theorem 4.3 and Proposition 3.12]\label{EN}
Let $\varrho$, $\varrho^{\prime}$ be positive relators of $\Gamma_g$ 
and let $X_\varrho$, $X_{\varrho^{\prime}}$ be the corresponding 
Lefschetz fibrations over $S^2$, respectively. 
Suppose that $\varrho$ is obtained by applying an $L^-$-substitution 
to $\varrho^\prime$. 
Then, $\sigma(X_\varrho)=\sigma(X_{\varrho^{\prime}})-1$ and $e(X_\varrho)=e(X_{\varrho^{\prime}})+1$. 
Therefore, 
\begin{eqnarray*}
\chi_{f_\varrho}=\chi_{f_{\varrho^\prime}},\ \ \ K_{f_\varrho}^2=K_{f_{\varrho^\prime}}^2-1.
\end{eqnarray*}
\end{prop}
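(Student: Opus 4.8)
The plan is to track how the lantern substitution changes the two Lefschetz fibration invariants $\sigma$ and $e$, and then mechanically deduce the stated effect on $\chi_f$ and $K_f^2$. Since Proposition~\ref{EN} is attributed to Endo and Nagami, the heart of the argument is the signature computation; the Euler characteristic statement is elementary and I would dispose of it first.

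First I would establish the Euler characteristic change. An $L^{-1}$-substitution replaces the three Dehn twists $t_{x_1}t_{x_2}t_{x_3}$ in $\varrho^\prime$ by the four Dehn twists $t_{d_1}t_{d_2}t_{d_3}t_{d_4}$ to produce $\varrho$. In terms of the positive relators, this means $\varrho$ has exactly one more singular fiber than $\varrho^\prime$, i.e. if $\varrho^\prime$ has $n$ singular fibers then $\varrho$ has $n+1$. Using the formula $e(X)=-4(g-1)+n$ quoted in the introduction, I immediately get $e(X_\varrho)=e(X_{\varrho^\prime})+1$, which is the second claimed identity.

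The main obstacle is the signature identity $\sigma(X_\varrho)=\sigma(X_{\varrho^\prime})-1$. My approach would be to exhibit the two fibrations as differing by a local surgery on a neighborhood of the relevant fibers and compute the signature change via Novikov additivity. Concretely, the lantern relation arises on an embedded four-holed sphere $\Sigma_0^4$; the piece of $X_{\varrho^\prime}$ built from the word $t_{x_1}t_{x_2}t_{x_3}$ and the piece of $X_\varrho$ built from $t_{d_1}t_{d_2}t_{d_3}t_{d_4}$ have the same boundary three-manifold (the monodromy around the boundary is $t_{d_1}t_{d_2}t_{d_3}t_{d_4}=t_{x_1}t_{x_2}t_{x_3}$), so I may cut out one plumbing piece and glue in the other without changing the fibration elsewhere. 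By Novikov additivity the signature change equals the difference of the signatures of these two bounded four-manifolds. The standard model here is that an $L$-substitution corresponds to a rational blowdown of a $(-4)$-sphere, which raises the signature by $1$; hence the reverse $L^{-1}$-substitution lowers it by $1$. I would either quote the Endo--Nagami computation directly or reconstruct it by identifying the local model: the four disjoint $(-1)$-sections created together with the exceptional configuration give a linear plumbing whose signature contribution differs by exactly one between the two local pictures.

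Finally, I would combine the two invariant changes to obtain the relative numerical invariants. Using the definitions $\chi_h=(\sigma+e)/4$ and $K^2=3\sigma+2e$, and noting that $\chi_f$ and $K_f^2$ differ from $\chi_h$ and $K^2$ only by the fixed quantity $(g-1)(k-1)$ (with $k=0$ here, so by a constant depending only on $g$), I compute
\begin{eqnarray*}
\chi_{f_\varrho}-\chi_{f_{\varrho^\prime}}=\frac{(\sigma_\varrho-\sigma_{\varrho^\prime})+(e_\varrho-e_{\varrho^\prime})}{4}=\frac{(-1)+1}{4}=0,\\
K_{f_\varrho}^2-K_{f_{\varrho^\prime}}^2=3(\sigma_\varrho-\sigma_{\varrho^\prime})+2(e_\varrho-e_{\varrho^\prime})=3(-1)+2(1)=-1,
\end{eqnarray*}
which yields $\chi_{f_\varrho}=\chi_{f_{\varrho^\prime}}$ and $K_{f_\varrho}^2=K_{f_{\varrho^\prime}}^2-1$ as claimed. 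The only genuinely hard input is the signature step; everything downstream is bookkeeping in these linear formulas.
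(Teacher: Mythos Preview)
The paper does not give its own proof of this proposition; it is simply quoted from Endo and Nagami \cite{EN} (their Theorem~4.3 and Proposition~3.12), and only the conclusions are used downstream. Your reconstruction is correct: the Euler characteristic step is exactly the elementary count of singular fibers, and the final linear bookkeeping for $\chi_f$ and $K_f^2$ is immediate from the definitions.

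On the signature step, Endo and Nagami's actual argument is phrased slightly differently from your sketch: they assign to each relation in $\Gamma_g$ a well-defined signature via Meyer's signature cocycle, prove additivity under concatenation and invariance under elementary transformations, and compute that the lantern relation contributes signature $-1$. Your Novikov-additivity/local-surgery description is an equivalent geometric reformulation, and in fact it is precisely the viewpoint that underlies the rational-blowdown interpretation noted in the remark following the proposition (cf.~\cite{EG}). Either route yields $\sigma(X_\varrho)=\sigma(X_{\varrho'})-1$, so there is no gap in your proposal.
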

\begin{rem}\rm
Endo and Gurtas \cite{EG} showed that $X_{\varrho^{\prime}}$ is a rational blowdown of $X_\varrho$ introduced by Fintushel and Stern \cite{FS0}. 
Such relations were also generalized by Endo, Mark, and Van Horn-Morris \cite{EMHM}. 
\end{rem}

\section{Proofs of Main results}
In order to prove Theorem~\ref{nonholo}, we will need the following positive relator: 
\begin{figure}[hbt]
 \begin{center}
      \includegraphics[width=9.5cm]{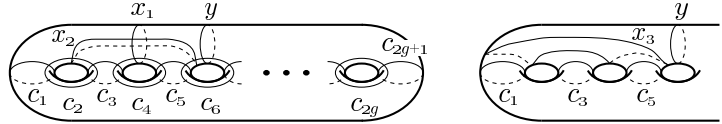}
      \caption{The curves $c_1,\ldots,c_{2g+1}$, $x_1,x_2,x_3,y$.}
      \label{chain}
   \end{center}
 \end{figure}
 
Suppose $g\geq 3$. 
Let $c_1,c_2,\ldots, c_{2g+1}$ be the curves in $\Sigma_g$ as shown in Figure~\ref{chain}. 
We denote by $h_g$ $(\in \Gamma_g)$ the product of $8g+4$ right handed Dehn twists 
\begin{eqnarray*}
h_g \ : \ = \ (t_{c_1}t_{c_2}\cdots t_{c_{2g+1}}^2\cdots t_{c_2}t_{c_1})^2. 
\end{eqnarray*}
It is well known that $h_g$ is a positive relator in $\Gamma_g$ and that 
$\sigma(X_{h_g})=-4(g+1)$ and $e(X_{h_g})=4(g+2)$. 
This gives $\chi_{f_{h_g}}=g$, $K_{f_{h_g}}^2=4g-4$ and $\lambda_{f_{h_g}}=4-4/g$ (i.e. $f_{h_g}$ is lying on the slope line). 
By Lemma 3.2 (c) of \cite{ABKP}, we have $\pi_1(X_{h_g})=1$. 
\begin{proof}[Proof of Theorem~\ref{nonholo}]
Let $x_1,x_2,x_3, y$ be the curves as shown in Figure~\ref{chain}. 
Since $c_1, x_i$ are nonseparating curves, there exists a diffeomorphism $f_i$ such that $\phi_i(c_1)=x_i$. 
Hence, we have the following positive relator $r_i$ $(i=1,2,3)$: 
\begin{eqnarray*}
r_i=\phi_ih_g\phi_i^{-1}&=&\phi_i(t_{c_1}t_{c_2}\cdots t_{c_{2g+1}}^2\cdots t_{c_2}t_{c_1})^2\phi_i^{-1}\\
&=&(t_{\phi_i(c_1)}t_{\phi_i(c_2)}\cdots t_{\phi_i(c_{2g+1})}^2\cdots t_{\phi_i(c_2)}t_{\phi_i(c_1)})^2\\
&=&(t_{x_i}t_{\phi_i(c_2)}\cdots t_{\phi_i(c_{2g+1})}^2\cdots t_{\phi_i(c_2)}t_{\phi_i(c_1)})^2=1 \ \in \ \Gamma_g .
\end{eqnarray*}
Let $r^{\prime}_g=r_1r_2r_3$. 
Since $f_{r^{\prime}_g}$ is the fiber sum of $f_{r_1}$, $f_{r_2}$ and $f_{r_3}$ which are obtained by applying simultaneous conjugations to $h_g$, we have 
\begin{eqnarray*}
\chi_{f_{r^\prime_g}}=3\chi_{f_{h_g}}=3g, \ \ \ K_{f_{r^\prime_g}}^2=3K_{f_{h_g}}^2=3(4g-4). 
\end{eqnarray*}
We apply elementary transformations to $r^{\prime}_g$ as follows:
\begin{eqnarray*}
&r^{\prime}_g& = r_1r_2r_3\\
 &=& t_{x_1}t_{\phi_1(c_2)}\cdots t_{\phi_1(c_2)}\underline{t_{\phi_1(c_1)}\cdot t_{x_2}}t_{\phi_2(c_2)}\cdots t_{\phi_2(c_1)}\cdot t_{x_3}t_{\phi_3(c_2)}\cdots t_{\phi_3(c_1)}\\
 &\sim& t_{x_1}t_{\phi_1(c_2)}\cdots t_{\phi_1(c_2)}\underline{t_{x_2}t_{t_{x_2}^{-1}(\phi_1(c_1))}}t_{\phi_2(c_2)}\cdots t_{\phi_2(c_1)}\cdot t_{x_3}t_{\phi_3(c_2)}\cdots t_{\phi_3(c_1)}\\
 &\rotatebox{90}{$\cdots$}& \hspace{30mm} \rotatebox{90}{$\cdots$}\\
 &\sim& t_{x_1}t_{x_2}t_{t_{x_2}^{-1}(\phi_1(c_2))}\cdots t_{t_{x_2}^{-1}(\phi_1(c_2))}t_{t_{x_2}^{-1}(\phi_1(c_1))}t_{\phi_2(c_2)}\cdots \underline{t_{\phi_2(c_1)}\cdot t_{x_3}}t_{\phi_3(c_2)}\cdots t_{\phi_3(c_1)}\\
 &\sim& t_{x_1}t_{x_2}t_{t_{x_2}^{-1}(\phi_1(c_2))}\cdots t_{t_{x_2}^{-1}(\phi_1(c_2))}t_{t_{x_2}^{-1}(\phi_1(c_1))}t_{\phi_2(c_2)}\cdots \underline{t_{x_3}t_{t_{x_3}^{-1}(\phi_2(c_1))}}t_{\phi_3(c_2)}\cdots t_{\phi_3(c_1)}\\
 &\rotatebox{90}{$\cdots$}& \hspace{30mm} \rotatebox{90}{$\cdots$}\\
 &\sim& (t_{x_1}t_{x_2}t_{x_3})W,
\end{eqnarray*}
where $W$ is a product of $24g+9$ right handed Dehn twists.
By the lantern relation, we get the following positive relator $r_g$ :
\begin{eqnarray*}
r_g \ : \  = \ (t_{c_1}t_{c_3}t_{c_5}t_y)W.
\end{eqnarray*}
Since $r_g$ is obtained by applying an $L^{-1}$-substitution to $r^{\prime}_g$, by Proposition~\ref{EN}
\begin{eqnarray*}
\chi_{f_{r_g}}=3g, \ \ \ K_{f_{r_g}}^2=3(4g-4)-1.
\end{eqnarray*}
Then, the slope of $f_{r_g}$ is equal to $4-4/g-1/3g$.

Since it is easy to check that $W$ includes the Dehn twist about a curve $\phi_3(c_i)$ for $1\leq i\leq 2g+1$, $\pi_1(X_{r_g})=\pi_1(X_{h_g})=1$ by Lemma 3.2 (c) of \cite{ABKP}. 
From Theorem of \cite{Ro} and $\sigma(X_{r_g})=3(-4(g+1))-1$, we see that $X_{r_g}$ is non-spin. 
This completes the proof of Theorem~\ref{nonholo}. 
\end{proof}
\begin{rem}\rm
Since $r_g$ is is obtained by applying an $L^{-}$-substitution to $r^\prime_g$, $X_{r_g}$ is a \textit{rational blowup} of $X_{r^\prime_g}$. 
By applying elementary transformations to a relator corresponding to a Lefschetz fibration which is obtained by taking a twisted fiber sum with sufficiently many Lefschetz fibrations, 
we obtain a positive relator such that we can apply a monodromy substitution, 
which corresponds to the operation of rational blowdown (resp. rational blowup) in \cite{EMHM}, to it. 
\end{rem}

\begin{rem}\rm
Miyachi and Shiga \cite{MS} produced genus-$g$ Lefschetz fibrations over $\Sigma_{2m}$ which do not satisfy the slope inequality.
\end{rem}

\begin{proof}[Proof of Corollary~\ref{nonholo2}]
For any $m\geq 0$, we consider the Lefschetz fibration $f_{r_gh_g^m}:X_{r_gh_g^m}\rightarrow S^2$ which is the fiber sum of $f_{r_g}$ and $m$ copies of $f_{h_g}$. Then, 
\begin{eqnarray*}
\chi_{f_{r_gh_g^m}}&=&\chi_{f_{r_g}}+m\chi_{f_{h_g}}=(3+m)g, \\
K_{f_{r_gh_g^m}}^2&=&K_{f_{r_g}}^2+mK_{f_{h_g}}^2=(3+m)(4g-4)-1.
\end{eqnarray*}
Therefore, we obtain $\lambda_{f_{r_gh_g^m}}=4-4/g-1/(m+3)g$. 

Let $f_{m,l} : X_{m,l}\rightarrow S^2$ be the fiber sum of $l$ copies of $f_{r_gh_g^m}$ (i.e. $f_{m,l}=f_{(r_gh_g^m)^l}$). 
By $\lambda_{f_{r_gh_g^m}}=4-4/g-1/(m+3)g$ and $\pi_1(X_{r_g})=1$, we have $\lambda_{f_{m,l}}=4-4/g-1/(m+3)g$ and $\pi_1(X_{m,l})=1$. 
By the result of Usher \cite{Us}, $X_{m,l}$ is minimal for $(m,l)\neq(0,0)$. 
From Rohlin's Theorem and $\sigma(X_{m,l})=l\{-4(g+1)(m+3)-1\}$, for $l\not\equiv 0 \ {\rm mod} \ 16)$, $X_{m,l}$ is non-spin. 
This completes the proof. 
\end{proof}

\begin{proof}[Proof of Corollary~\ref{nonholo3}]
Let $\varrho_1=h_g$ and $\varrho_2=r_g$
When we apply the argument of Theorem~\ref{nonholo} again, with $\varrho_1=h_g$ replaced by $\varrho_2=r_g$, we obtain a genus-$g$ Lefschetz fibration $f_{\varrho_3} : X_{\varrho_3}\rightarrow S^2$ with 
\begin{eqnarray*}
\chi_{f_{\varrho_3}}&=&3\chi_{f_{\varrho_2}}=3\cdot 3\chi_{f_{\varrho_1}}\\
K_{f_{\varrho_3}}^2&=&3K_{f_{\varrho_2}}^2-1=3(3K_{f_{\varrho_1}}-1)-1.
\end{eqnarray*}
By repeating this argument, we get a genus-$g$ Lefschetz fibration $f_{\varrho_m}$ $(m\geq 1)$ with 
\begin{eqnarray*}
\chi_{f_{\varrho_m}}&=&3^{m-1}\chi_{f_{\varrho_1}}=3^{m-1}g\\
K_{f_{\varrho_m}}^2&=&3(\cdots (3(3K_{f_{\varrho_1}}^2-1)-1)\cdots)-1=3^{m-1}K_{f_{\varrho_1}}-3^{m-2}-\cdots -3-1\\
&=&3^{m-1}(4g-4)-(3^{m-1}-1)/2.
\end{eqnarray*}
Therefore, $\lambda_{f_{\varrho_m}}=4-4/g-1/2g+1/(2\cdot 3^{m-1}g)$. 

Let $f^\prime_{m,l} : Y_{m,l}\rightarrow S^2$ be the fiber sum of $l$ copies of $f_{\varrho_m}$, and so $\lambda_{f^\prime_{m,l}}=4-4/g-1/2g+1/(2\cdot 3^{m-1}g)$. 
By an argument similar to the proof of Theorem~\ref{nonholo}, we see $\pi_1(Y_{m,l})=1$. 
By the result of Usher, $Y_{m,l}$ is minimal for $l\geq 1$. 
From Rohlin's Theorem and $\sigma(Y_{m,l})=l\{3^{m-1}(-4(g+1))-(3^{m-1}-1)/2\}$, $Y_{m,l}$ is non-spin if $m$ is even and $l\not\equiv 0 \ {\rm mod} \ 16$. 
This completes the proof.
\end{proof}

From the slope inequality for holomorphic fibrations, we have the following necessary condition for a Lefschetz fibration 
to be holomorphic :
\begin{prop}[Xiao, \cite{Xi}]\label{xi}
If a Lefschetz fibration $f$ is holomorphic, then the slope inequality 
$\lambda_f\geq 4-4/g$ holds.
\end{prop}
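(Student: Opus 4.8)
The plan is to reduce the statement to Xiao's slope inequality for holomorphic fibrations over curves, recalled in the introduction: every relatively minimal holomorphic genus-$g$ fibration $S\to C$ which is not a holomorphic bundle (equivalently $\chi_f\neq 0$) satisfies $4-4/g\leq\lambda_f$. First I would observe that if $f:X\to S^2$ is holomorphic in the sense of Definition~\ref{LF}, then by definition there are complex structures on $X$ and on $S^2$ making $f$ holomorphic; in particular the base becomes the curve $\mathbb{CP}^1$ of genus $k=0$, and $f:X\to\mathbb{CP}^1$ is a surjective holomorphic map from a compact complex surface to a smooth curve whose generic fibre is the smooth genus-$g$ surface $\Sigma_g$. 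Thus $f$ is exactly a holomorphic genus-$g$ fibration over a genus-$0$ curve, of the type to which Xiao's theorem applies.

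Next I would verify the hypotheses of Xiao's theorem. Relative minimality is immediate: the relative minimality condition (iii) of Definition~\ref{LF} forbids $(-1)$-spheres in the fibres of $f$, which in particular rules out holomorphic $(-1)$-curves in fibres, so the fibration is relatively minimal in the complex-analytic sense. For the slope $\lambda_f=K_f^2/\chi_f$ to be defined we need $\chi_f\neq 0$; by the result of Li \cite{Li} recalled in the introduction this is equivalent to the presence of at least one singular fibre, which is the only case in which the slope is discussed. If instead $n=0$ then $X=\Sigma_g\times S^2$ is a holomorphic bundle and there is nothing to prove.

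Finally I would match the relative invariants on the two sides. The quantities $\chi_f=\chi_h-(g-1)(k-1)$ and $K_f^2=K^2-8(g-1)(k-1)$ are built only out of $\sigma$, $e$, $g$ and $k$, and for a fibration over $S^2$ one takes $k=0$. These are literally the formulas defining the relative invariants of the holomorphic fibration $f:X\to\mathbb{CP}^1$, and since $\sigma(X)$ and $e(X)$ are topological they are unchanged by the passage to the complex structure; hence $\chi_f$, $K_f^2$ and $\lambda_f$ coincide with their holomorphic counterparts. Xiao's inequality then gives $\lambda_f\geq 4-4/g$ at once.

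The step I expect to be the main obstacle is the verification in the second paragraph that a holomorphic $f$ in the sense of Definition~\ref{LF} is genuinely a relatively minimal fibration in Xiao's sense, that is, proper with connected fibres over a smooth base and with no $(-1)$-curve in a fibre. All of this is automatic here, since the base is the single curve $\mathbb{CP}^1$ and the general fibre is one smooth genus-$g$ curve, so once these identifications are in place the proposition follows directly from the cited slope inequality of Xiao.
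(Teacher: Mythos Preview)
Your proposal is correct and matches the paper's approach: the paper does not give an independent proof of this proposition but simply attributes it to Xiao \cite{Xi}, introducing it with the sentence ``From the slope inequality for holomorphic fibrations, we have the following necessary condition for a Lefschetz fibration to be holomorphic.'' Your write-up just makes explicit the routine identifications (holomorphic Lefschetz fibration over $S^2$ $=$ relatively minimal holomorphic genus-$g$ fibration over $\mathbb{CP}^1$, matching of the relative invariants, $\chi_f\neq 0$ whenever the slope is defined) needed to invoke Xiao's theorem, which is exactly what the paper is implicitly doing.
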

As a consequence, we have the following results.
\begin{cor}\label{nonholomorphic}
The Lefschetz fibrations of Theorem~\ref{nonholo}, Corollary~\ref{nonholo2} and~\ref{nonholo3} are non-holomorphic. 
\end{cor}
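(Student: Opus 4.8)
The plan is to derive the non-holomorphicity as an immediate logical consequence of the slope computations already carried out, combined with the necessary condition recorded in Proposition~\ref{xi}. The essential observation is that Proposition~\ref{xi} is stated in contrapositive-ready form: holomorphicity of a Lefschetz fibration $f$ forces $\lambda_f \geq 4 - 4/g$. So the entire argument reduces to exhibiting, for each of the fibrations constructed in Theorem~\ref{nonholo}, Corollary~\ref{nonholo2}, and Corollary~\ref{nonholo3}, a strict violation of this lower bound.

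First I would recall the three slope values established earlier. For $f_{r_g}$ of Theorem~\ref{nonholo} we have $\lambda_{f_{r_g}} = 4 - 4/g - 1/3g$; for $f_{m,l}$ of Corollary~\ref{nonholo2} we have $\lambda_{f_{m,l}} = 4 - 4/g - 1/(m+3)g$; and for $f'_{m,l}$ of Corollary~\ref{nonholo3} we have $\lambda_{f'_{m,l}} = 4 - 4/g - 1/2g + 1/(2\cdot 3^{m-1}g)$. In each case the slope has the shape $4 - 4/g$ minus a strictly positive quantity, so the strict inequality $\lambda_f < 4 - 4/g$ holds. For the first two families this is transparent, since $1/3g > 0$ and $1/(m+3)g > 0$ for all admissible $g \geq 3$ and $m \geq 0$. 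For the third family the only point requiring a line of checking is that the net correction $-1/2g + 1/(2\cdot 3^{m-1}g)$ is strictly negative; this follows because $1/(2\cdot 3^{m-1}g) \leq 1/(2g)$ for $m \geq 1$, with equality only at $m = 1$, and at $m=1$ the two terms cancel—so I would note that for $m=1$ the slope equals $4-4/g-1/2g+1/2g = 4-4/g$ exactly, meaning the strict violation requires $m \geq 2$ in that family. This boundary case is the one subtlety worth flagging, and I would either restrict the corollary's non-holomorphic conclusion to $m \geq 2$ or verify directly that the $m=1$ member still violates strictness through a finer invariant.

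With the strict inequalities in hand, the proof is purely a matter of invoking the contrapositive of Proposition~\ref{xi}: since each $\lambda_f < 4 - 4/g$, the fibration cannot be holomorphic, for otherwise Proposition~\ref{xi} would force $\lambda_f \geq 4 - 4/g$, a contradiction. I would write this as a short deduction, applied uniformly across the three families, concluding that all the fibrations in question are non-holomorphic.

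The step I expect to carry the most weight is not any computation—those are already done—but rather the careful bookkeeping of the $m=1$ boundary case in Corollary~\ref{nonholo3}, where the slope meets the line $4-4/g$ exactly rather than dropping strictly below it. The argument for Theorem~\ref{nonholo} and Corollary~\ref{nonholo2} is genuinely immediate, so the \emph{only} genuine obstacle is ensuring that the strict inequality $\lambda_f < 4 - 4/g$ is actually attained in every case claimed to be non-holomorphic; everything else is a direct application of the already-established necessary condition.
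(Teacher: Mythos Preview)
Your approach is exactly the paper's: the corollary is presented there as an immediate consequence of Proposition~\ref{xi} with no further argument, and your plan---take the contrapositive of Xiao's slope inequality and feed in the computed slopes---is precisely that.

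Your observation about the $m=1$ case in Corollary~\ref{nonholo3} is sharper than anything the paper records. You are right that the formula $\lambda_{f'_{m,l}} = 4 - 4/g - 1/2g + 1/(2\cdot 3^{m-1}g)$ yields exactly $4-4/g$ at $m=1$, so Proposition~\ref{xi} gives no information there. In fact, tracing the construction, $\varrho_1 = h_g$ and $Y_{1,l}$ is a fiber sum of copies of the hyperelliptic fibration $f_{h_g}$, which is holomorphic; so the non-holomorphicity claim genuinely fails at $m=1$ and your instinct to restrict to $m\geq 2$ is the correct fix, not a search for a finer invariant. The paper does not address this boundary case.
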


\begin{rem}\rm
There are various kinds of non-holomorphic Lefschetz fibrations. 
By fiber summing two copies of genus-$2$ Lefschetz fibration due to Matsumoto \cite{Ma}, 
Ozbagci and Stipsicz \cite{OS} constructed non-holomorphic genus-$2$ Lefschetz fibrations 
whose total space does not admit a complex structure. 
Korkmaz \cite{Kor} generalized their examples to $g\geq 3$. 
The above examples of Fintushel and Stern are also non-holomorphic Lefschetz fibrations. 
From study of divisors in moduli space, Smith \cite{Sm3} showed that a genus-$3$ Lefschetz fibration 
over $S^2$ which was produced by Fuller is non-holomorphic. 
Endo and Nagami \cite{EN} constructed some examples of non-holomorphic Lefschetz fibrations 
which violate lower bounds of the slope for non-hyperelliptic fibrations of genus 
$3$, $4$ and $5$ from the results of Konno \cite{Kon1}, \cite{Kon2} and Chen \cite{Ch}. 
Hirose \cite{Hi} also gave some examples of $g=3,4$. 
\end{rem}


\begin{proof}[Proof of Theorem~\ref{lower}]
Let $f:X\rightarrow S^2$ be a nontrivial genus-$g$ Lefschetz fibration with $b_2^+(X)=1$. 
Note that $-4(g-1)\leq K^2$, and so $4(g-1)\leq K_f^2$ (see \cite{St}, Lemma 3.2). 
Suppose that $X$ is not diffeomorphic to the blow-up of a ruled surface. 

First, suppose $b_1=0$. 
Since $b_2^+=1$ and $\chi_f=(\sigma+e)/4+(g-1)=(b_2^+-b_1+1)/2+(g-1)=g$, we have $4(g-1)/g\leq K_f^2/\chi_f=\lambda_f$. 
On the other hand, since $K^2=3\sigma+2e=5b_2^+-b_2^-+4-4b_1=9-b_2^-$, by $b_2^-\geq 0$, we have $\lambda_f= K_f^2/\chi_f=\{9-b_2+8(g-1)\}/g\leq 8+1/g$. 

Next, suppose $b_1=2$. 
Then, $\chi_f=g-1$. 
Therefore, by $4(g-1)\leq K_f^2$, we have $4\leq \lambda_f$. 
Since $0\leq e=2-2b_1+b_2^++b_2^-=2-4+1+b_2^-=-1+b_2^-$, we obtain $\lambda_f= \{1-b_2^-+8(g-1)\}/(g-1)\leq 8$. 

Finally, suppose that $X$ is the $m$-fold blow-up of a $S^2$-bundle over $\Sigma_k$. 
Let $Y$ be the $S^2$-bundle over $\Sigma_k$. 
Then, since $b_1(Y)=2k$, $b_2^\pm(Y)=1$ and $X= Y\sharp m\overline{\mathbb{CP}^2}$, we have $b_1(X)=2k$, $b_2^+(X)=1$, $b_2^-(X)=m+1$, $e(X)=4-4k+m$ and $\sigma(X)=-m$. 
Hence, we have $\lambda_f=8-m/(g-k)$. 
From $m\geq 0$, $\lambda_f\leq 8$. 
We will give lower bounds for $\lambda_f$. 
By Lemma 3.2 in \cite{St2}, $4(2k-g)+ m\leq 4$. 
This gives $2k\leq g$ or $2k=g+1$ and $m=0$. 
By Proposition 4.4 in \cite{Li}, we need only consider $2k\leq g$. 
From $\lambda_f=8-m/(g-k)$, $4(2k-g)+ m\leq 4$ and $0\leq k\leq g/2$, we have $\lambda_f\geq 4+4(k-1)/(g-k)$. 
Fintushel and Stern \cite{FS2} showed that $(\Sigma_k\times S^2)\sharp 4m\overline{\mathbb{CP}^2}$ admits a genus-$(2k+m-1)$ Lefschetz fibration $f_{FS}$ over $S^2$. 
When $m=g-2k+1$, we find $b_2^+=1$ and that $\lambda_{f_{FS}}=4+4(k-1)/(g-k)$. 
This completes the proof.
\end{proof}

%


\end{document}